\def\DefineSymbol#1#2{\newcommand{#1}{{\mathrm{#2}}}}
\def\DefineCategory#1#2{\newcommand{#1}{{\mathsf{#2}}}}
\theoremstyle{plain}
    \newtheorem{theorem}{Theorem}[section]
    \newtheorem{lemma}[theorem]{Lemma}
    \newtheorem{corollary}[theorem]{Corollary}
\theoremstyle{definition}
    \newtheorem{definition}[theorem]{Definition}
\theoremstyle{remark}
    \newtheorem{remark}[theorem]{Remark}
    \newtheorem{example}[theorem]{Example}
    \numberwithin{equation}{section}
\crefname{lemma}{Lemma}{Lemma}
\DefineSymbol{\pt}{pt}
\DefineSymbol{\grouplike}{gp}
\DefineSymbol{\id}{id}
\DefineSymbol{\proj}{proj}
\DefineSymbol{\incl}{incl}
\DefineSymbol{\const}{const}
\DefineSymbol{\op}{op}
\DefineSymbol{\inj}{inj}
\let\proj\relax
\DefineSymbol{\proj}{proj}
\DeclareMathOperator{\Aut}{Aut}
\DeclareMathOperator{\SWF}{SWF}
\DeclareMathOperator{\Br}{Br}
\DeclareMathOperator{\Map}{Map}
\DeclareMathOperator*{\Mapl}{Map}
\DeclareMathOperator*{\hocolim}{hocolim}
\DeclareMathOperator*{\colim}{colim}
\DeclareMathOperator{\Ind}{Ind}
\DefineCategory{\Set}{Set}
\DefineCategory{\sset}{sSet}
\DefineCategory{\msset}{sSet^+}
\DefineCategory{\Ab}{Ab}
\DefineCategory{\Alg}{Alg}
\DefineCategory{\Mod}{Mod}
\DefineCategory{\Sp}{Sp}
\DefineCategory{\Prl}{Pr^L}
\DefineCategory{\Fun}{Fun}
\DefineCategory{\Orb}{Orb}
\DefineCategory{\Pre}{Pre}
\DefineCategory{\Top}{Top}
\DefineCategory{\rfib}{RFib}
\DefineCategory{\base}{\Omega}
\DefineCategory{\calg}{CAlg}
\DefineCategory{\PrlG}{Pr^L_{\mathit{G}}}
\DefineCategory{\CatG}{Cat_{\mathit{G}}}
\newcommand{\unit}{\mathbf{1}}
\renewcommand{\S}{{\mathbb{S}}}
\newcommand{\GL}{{\mathit{GL}}}
\newcommand{\B}{\mathit{B}}
\newcommand{\Cat}{{\mathsf{Cat}_{\infty}}}
\newcommand{\lCat}{{\hat{\mathsf{Cat}}_{\infty}}}
\newcommand{\lFun}{{\mathsf{Fun}^{\mathrm{colim}}}}
\newcommand{\Pic}{\mathrm{Pic}}
\newcommand{\lCatG}{{\hat{\CatG}}}
\newcommand{\Vect}{\mathsf{Vect}_\Cbb}
\newcommand{\Abb}{\mathbb{A}}
\newcommand{\Cbb}{\mathbb{C}}
\newcommand{\Ebb}{\mathbb{E}}
\newcommand{\Rbb}{\mathbb{R}}
\newcommand{\Zbb}{\mathbb{Z}}
\newcommand{\Hcal}{\mathcal{H}}
\newcommand{\Tcal}{\mathcal{T}}
\newcommand{\Cscr}{\mathscr{C}}
\newcommand{\Dscr}{\mathscr{D}}
\newcommand{\Escr}{\mathscr{E}}
\newcommand{\Oscr}{\mathscr{O}}
\newcommand{\Rscr}{\mathscr{R}}
\newcommand{\Sscr}{\mathscr{S}}
\renewcommand{\hat}{\widehat}
\newcommand{\setmid}{\mathrel{}\middle|\mathrel{}}
\newcommand{\coend}[2][]{\smashoperator[r]{\int^{#1}_{#2}}\;}
\newcommand*{\relrelbarsep}{.386ex}
\newcommand*{\relrelbar}{%
  \mathrel{%
    \mathpalette\@relrelbar\relrelbarsep
  }%
}
\newcommand*{\@relrelbar}[2]{%
  \raise#2\hbox to 0pt{$\m@th#1\relbar$\hss}%
  \lower#2\hbox{$\m@th#1\relbar$}%
}
\providecommand*{\rightrightarrowsfill@}{%
  \arrowfill@\relrelbar\relrelbar\rightrightarrows%
}
\providecommand*{\leftleftarrowsfill@}{%
  \arrowfill@\leftleftarrows\relrelbar\relrelbar%
}
\providecommand*{\xrightrightarrows}[2][]{%
  \ext@arrow 0359\rightrightarrowsfill@{#1}{#2}%
}
\providecommand*{\xleftleftarrows}[2][]{%
  \ext@arrow 3095\leftleftarrowsfill@{#1}{#2}%
}
\title{Even-periodic cohomology theories for twisted parametrized spectra}
\author{Takumi Maegawa}
\begin{document}

\begin{abstract}
    We recall the notion of twisted parametrized spectra defined by Douglas and provide a sufficient condition for an $\infty$-category of twisted parametrized module spectra to be untwisted over an even-periodic $E_2$-ring.
    It is an easy consequence of the universal property of Thom spectra.%

    We also investigate a genuine equivariant generalization based on the theory of $\infty$-categories internal to the $\infty$-topos of $G$-spaces for a compact Lie group $G$.

    We expect that our sufficient condition is satisfied in a number of gauge theoretic settings.
    This article is intended as a warm-up for a generalization of certain Seiberg-Witten Floer stable homotopy theory, which we look forward to.
\end{abstract}

\maketitle
\tableofcontents

\section{Introduction}

The notion of twisted parametrized spectra was introduced by C. Douglas in~\cite{Douglas}.
Such objects occasionally arise in nature. 
For example, for a closed oriented $3$-manifold $Y$ with $b_1=0$, there is a construction of a stable homotopy type $\SWF(Y)$ as in~\cite{b1=0}, called the Seiberg-Witten Floer homotopy type, which recovers a certain flavor of the monopole Floer homology of $Y$ as its equivariant Borel homology, due to Lidman and Manolescu~\cite{FloerHomologies}. 
Furthermore, by applying generalized cohomology theories to the Floer homotopy type, there can be produced several useful invariants of $3$-manifolds, from which the $10/8$-type inequalities for spin $4$-manifolds having boundary $Y$ is obtained in~\cite{spin4}, for example.

The construction of $\SWF(Y)$ involves stabilizations of certain homotopy types by the \textit{finite-dimensional approximations} $V_\lambda^\mu$ of the given infinite-dimensional vector space $V$ on which the Seiberg-Witten equation is defined. Namely, it is roughly of the following form.
\begin{equation}\label{swf}
    \SWF(Y) \simeq \hocolim_{\substack{\lambda \to -\infty \\ \mu \to +\infty}} \Sigma^{-\Cbb^n}\Sigma^{-V^0_\lambda} \Sigma^\infty I_\lambda^\mu\;.
\end{equation}
Here, the approximation $V_\lambda^\mu$ is the subspace spanned by the eigenvectors of a self-adjoint Fredholm operator $\slashed{D}$ defined on $V$ with eigenvalue in the interval $(\lambda,\mu ]$, and the homotopy type $I_\lambda^\mu$ is the so-called Conley index of the Seiberg-Witten flow restricted to $V_\lambda^\mu$.
The assumption $b_1=0$ is inevitable %in this construction
when we seek for $\SWF(Y)$ as a usual stable homotopy type, since the Seiberg-Witten flow in question contains a differential operator parametrized by the Picard torus $H^1(Y,\Rbb)/H^1(Y,\Zbb)$ of $Y$.
The situation suggests that in order to construct the Seiberg-Witten Floer homotopy type for an arbitrary closed $3$-manifold $Y$, we can no longer hope $\SWF(Y)$ to be an object in the classical stable homotopy category $\Sp$.
A similar problem occurs when we attempt to construct a parametrized version of the Seiberg-Witten Floer homotopy type, that is, Seiberg-Witten Floer homotopy types for bundles of $3$-manifolds.
In both cases, the major issue is that the vector space $V$ and the eigenvalues of the operator $\slashed{D}$  do depend on a point in some space $X$, hence the finite-dimensional approximation method is only valid locally on $X$.

Taking the domain $V$ of the Seiberg-Witten map to be a vector bundle over $X$, and regarding the eigenvalues of $\slashed{D}$ as sorts of multi-valued functions on $X$ that may contain nontrivial monodromy, the expression~\eqref{swf} is now expected to be a global section of some bundle of categories fibered over $X$ whose fibers are stable homotopy categories.
To model such phenomena, Douglas used the term \textit{twisted parametrized spectra} and investigated elementary properties of them.
Since lots of geometric applications have been deduced from the cohomology of Floer homotopy types, we are especially interested in the existence of certain cohomology groups for those twisted parametrized spectra.

Roughly speaking, a twisted parametrized spectrum is locally a parametrized spectrum, but is patched together by a cocycle valued in the group $\Pic_\S$ of invertible spectra, or the automorphism group of the category $\Sp$ of spectra.
The definition is conveniently well-written in the language of $\infty$-categories as is already done in~\cite{ABG}. 

In~\cite{spin4}, it is observed that the desuspension appearing in~\eqref{swf} can be replaced by the negative suspension by a complex $\mathrm{Pin}(2)$-representation.
Therefore, it seems justifiable to take our mathematical model which focuses on $\Pic_\S$-valued cocycles %which are assumed to be coming from 
that are coming from cocycles valued in the infinite loop space $\Zbb\times BU$ of virtual complex vector bundles, which reflects the intuitive situation that the Seiberg-Witten Floer homotopy types are locally constructed via desuspension by complex vector spaces.
In this direction, there is a sufficient condition for the existence of cohomology theories for such twisted parametrized spectra. The main result can be stated as follows.

\begin{theorem}\label{main}
    Let $X$ be a CW-complex, $h$ a map $X \to \B\Pic_\S$, and $R$ an even-periodic $\Ebb_2$-ring spectrum. Suppose that the homotopy class of $h$ can be lifted as follows.
    \begin{equation*}
        \begin{tikzcd}
            {} & \B(\mathbb{Z}\times \B U) \arrow[d, "\B J_\Cbb"] \\
            X \arrow[ru, dashed] \arrow[r, "h"'] & \B\mathrm{Pic}_{\mathbb{S}}
        \end{tikzcd}
    \end{equation*}
    Then any twisted parametrized spectrum, that is, an object $\Tcal$ in the $\infty$-category $\lim \left(X \xrightarrow{h} B\Pic_\S \to \lCat\right)$, admits the twisted $R$-cohomology group $R^\bullet(\Tcal)$, or rather the $R$-module homotopy type $R\otimes\Tcal$ in $(\Mod_R){}_{/X}$.
\end{theorem}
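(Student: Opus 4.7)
The plan is to reduce the construction of $R\otimes\Tcal$ to the observation that, under the hypothesis, the twist classified by $h$ becomes trivializable after base change from $\S$ to $R$. The unit $\S\to R$, viewed as an $\Ebb_2$-ring map, induces a monoidal functor $(-)\otimes R\colon\Sp\to\Mod_R$ and hence a homomorphism of $\Ebb_1$-groups $\Pic_\S\to\Pic_R$. This assembles into a natural transformation from the functor $\B\Pic_\S\to\lCat$ classifying the twist of $\Tcal$ to the composite $\B\Pic_\S\to\B\Pic_R\to\lCat$. Passing to limits over $X$, each $\Tcal$ determines a canonical object $R\otimes\Tcal\in\lim(X\to\B\Pic_R\to\lCat)$, and to identify this limit with $\Fun(X,\Mod_R)\simeq(\Mod_R)_{/X}$ it suffices to trivialize the composite $X\to\B\Pic_\S\to\B\Pic_R$.

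By the given factorization of $h$, it is enough to trivialize the composite $\B(\Zbb\times\B U)\xrightarrow{\B J_\Cbb}\B\Pic_\S\to\B\Pic_R$. Looping once, this reduces to producing an $\Ebb_1$-nullhomotopy of $J_R\colon\Zbb\times\B U\to\Pic_\S\to\Pic_R$, whose delooping is then null. By the universal property of Thom spectra of Ando-Blumberg-Gepner-Hopkins-Rezk, the space of $\Ebb_n$-nullhomotopies of such a composite is equivalent to the space of $\Ebb_n$-ring maps $MUP\to R$, where $MUP$ is the Thom $\Ebb_\infty$-ring of the complex $J$-homomorphism, a model for periodic complex cobordism. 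It therefore suffices to exhibit an $\Ebb_2$-ring map $MUP\to R$; since $R$ is $\Ebb_2$ and even-periodic, such an $\Ebb_2$-complex orientation is provided by standard orientation theory. Restricting to the underlying $\Ebb_1$-nullhomotopy and delooping yields the desired trivialization of $X\to\B\Pic_R$, producing $R\otimes\Tcal\in(\Mod_R)_{/X}$.

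The main obstacle is this last step: the invocation of $\Ebb_2$ complex orientability for even-periodic $\Ebb_2$-rings. The underlying homotopy-ring orientation is automatic from even-periodicity (via the usual computation with $\Cbb P^\infty$), and the $\Ebb_\infty$-version for $\Ebb_\infty$-rings is well-known, but pinning down the intermediate $\Ebb_2$-case—and the transition from $MU$ to its periodic refinement $MUP$—requires care. The remainder of the argument is a formal manipulation of limits, natural transformations, and the ABGHR universal property.
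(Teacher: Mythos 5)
Your proposal follows the same architecture as the paper's proof: trivialize the $R$-linear twist by reducing to the composite $\B(\Zbb\times\B U)\to \B\Pic_\S \to \B\Pic_R$, loop once, convert the sought $\Ebb_1$-nullhomotopy into an $\Ebb_1$-ring map $MUP \to R$ via the ABGHR universal property of Thom spectra (the paper's \cref{lemma:nullhomotopy}), and produce that orientation from even-periodicity. However, the step you yourself flag as the obstacle is a genuine gap, and it is precisely where the paper does the real work: ``standard orientation theory'' does not directly hand you an $\Ebb_2$-ring map $MUP \to R$. What it supplies --- the Hopkins--Lawson theorem, cited in the paper as \emph{genera} --- is an $\Ebb_2$-ring map $MU \to R$ out of the \emph{connective, non-periodic} Thom spectrum, for any even $\Ebb_2$-ring. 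This only trivializes the $\B U$-component of the twist, not the $\Zbb$-component.

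The passage from $MU$ to the periodic $MUP$ at the $\Ebb_2$-level is the content of the paper's \cref{corollary:E2MUP} and is not formal. It needs two further ingredients. First, the splitting $\Omega^\infty ku \simeq \Zbb \times \B U$ must be promoted to an $\Ebb_2$-splitting; the paper obtains an $\Ebb_2$-section $\Zbb \to \Omega^\infty ku$ from the computation $ku^2(\Cbb P^\infty)\cong\Zbb$. Second, the residual map $\Zbb \to \Pic_R$ must be shown $\Ebb_2$-trivial, and it is only here that even-\emph{periodicity} (as opposed to evenness, which already suffices for the $MU$-orientation) enters: periodicity lets the map factor through $B\GL_1 R$, and $\Map_{\Ebb_2}(\Zbb, B\GL_1 R)\simeq\Map_\ast(\Cbb P^\infty, B^3\GL_1 R)$ is then connected because $\Cbb P^\infty$ has cohomology concentrated in even degrees while $B^3\GL_1 R$ has homotopy concentrated in odd degrees. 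Without an argument of this shape, an $\Ebb_2$-orientation of $MU$ alone does not yield the nullhomotopy of $\B(\Zbb\times\B U) \to \B\Pic_R$ that your construction of $R\otimes\Tcal$ requires.
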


It is worth noticing that the space $\B(\Zbb\times\B U)=\Omega^{\infty-1}ku$ is the classifying space for the first $KU$-cohomology group since the group $K^1(X)$ classifies Hilbert bundles $\Hcal$ over $X$ together with self-adjoint Fredholm operators $\slashed{D}\colon \Hcal \to \Hcal$, which in fact naturally arise in the setting of certain gauge theories.
In other words, the ``twisting datum'' $h$ and also the assumption of the theorem is \emph{automatically obtained} once we specify a Hilbert bundle and a certain linear part of a gauge theoretic equation.

\begin{remark}
    There is a universal even-periodic ring spectrum $MUP$ namely the periodic complex bordism spectrum, which is in fact an $\Ebb_\infty$-ring. The theorem in particular states that the $MUP$-module homotopy types parametrized over $X$ are well-defined under the assumption.
    There is also a growing expectation on various Floer homotopy theories that they can generally be constructed over the $\Ebb_\infty$-ring $MUP$, as is partly advertised by Abouzaid and Blumberg in~\cite{abouzaid}.

    A more interesting direction may be the case $R=MUP^{\otimes n+1}$. In that case we might hope that there results a cosimplical $MUP$-module whose totalization formed in $\Sp$ gives a well-defined stable homotopy type, inspired by the Adams-Novikov type descent argument.
\end{remark}

The proof of~\cref{main} is a straightforward application of the universal property of Thom spectra.
We shall also include a (genuine) equivariant generalization of~\cref{main} in this paper.
See~\cref{equivariant} for details.

\subsection{Acknowledgements}

This paper was originally a part of the Master's thesis written by the author.

The author would like to acknowledge Prof.~Mikio Furuta for suggesting this research direction.
He also wishes to thank Jin Miyazawa for introducing the background issue lying in the Seiberg-Witten Floer homotopy theory~\cite{b1=0} to the author, which stimulated this work.

\subsection{Conventions}

By an \textit{$\infty$-category}, we implicitly mean a quasicategory, which is extensively studied in the literature.
For an $\infty$-category $\Cscr$, the associated mapping spaces will be denoted by $\Map_{\Cscr}(\;,\;)$.
By a \textit{space}, we mean an essentially small Kan complex.
In other words, a space would be constructed as a possibly large $\infty$-groupoid that has only small sets of isomorphism classes of objects as well as the higher simplices.
The $\infty$-categories $\Sscr$ of spaces, $\Cat$ of small $\infty$-categories, $\lCat$ of possibly large $\infty$-categories, $\Prl$ of presentable $\infty$-categories and left adjoint functors between them, and $\Sp$ of spectra will be frequently used. 

%The tensor product of spectra will be denoted by the smash product $\wedge$.
The $\infty$-category $\Prl$ is equipped with a symmetric monoidal structure whose tensor product $\otimes$ is left adjoint to the internal hom category $\lFun(\;,\;)$ of colimit-preserving functors.
It is uniquely characterized by the property that the functor $\Fun\left( (-)^\op , \Sscr\right)\colon \Cat \to \Prl$ becomes symmetric monoidal.
We say a presentable $\infty$-category is \textit{presentably $\Ebb_n$-monoidal} if it is equipped with an $\Ebb_n$-algebra structure in $\Prl$.

For an $\infty$-operad $\Oscr$ and $\Oscr$-monoidal $\infty$-categories $\Cscr$ and $\Dscr$, the $\infty$-category of lax $\Oscr$-monoidal functors from $\Cscr$ to $\Dscr$ is denoted by $\Alg_{\Cscr/\Oscr}(\Dscr)$, also by $\Alg_{\Cscr}(\Dscr)$ when $\Oscr$ is the terminal $\infty$-operad $\Ebb_\infty$, and yet by $\Alg_{/\Oscr}(\Dscr)$ when $\Cscr=\Oscr$.
We write it $\Alg_{\Oscr}(\Dscr)$ instead of $\mathsf{Mon}_{\Oscr}(\Dscr)$ even if the symmetric monoidal structure is given by cartesian products in $\Dscr$.
We also write $\calg(\Dscr)$ for $\Alg_{\Ebb_\infty}(\Dscr)$.

\begin{remark}
    When $X$ is an $\infty$-groupoid, we canonically identify the opposite $\infty$-category $X^\op$ with $X$ itself.
    This is achieved for example by the functorial zig-zag \[X^\op \xleftarrow{\sim} \mathrm{Tw}(X) \xrightarrow{\sim} X\] provided by the twisted arrow $\infty$-category construction.
\end{remark}

\section{Recollections of twisted parametrized spectra}

\subsection{Picard $\infty$-groupoids of presentably monoidal $\infty$-categories}

Let $\Rscr$ be a presentably $\Ebb_n$-monoidal $\infty$-category.
The full subcategory $\Rscr^\times$ spanned by the invertible objects is naturally endowed with an $\Ebb_n$-monoidal structure%~\cite{ABG}.
.

\begin{definition}\label{def:pic}

    The Picard $\infty$-groupoid of $\Rscr$ is defined to be the maximal $\infty$-subgroupoid of the $\infty$-category of invertible objects. 
    \[
        \Pic(\Rscr) \coloneqq (\Rscr^\times){}^\simeq = (\Rscr^\simeq){}^\times
    \]
\end{definition}

By construction, there is a pullback square of $\infty$-categories 
\[
    \begin{tikzcd}
        \Aut_\Rscr(\unit) \arrow[r] \arrow[d] \arrow[rd, "\lrcorner"{anchor=center, pos=0.125}, draw=none] & \left(\Pic\Rscr\right){}_{/\unit} \arrow[d, twoheadrightarrow] \\
        \left\{\unit\right\} \arrow[r] & \Pic(\Rscr)
    \end{tikzcd}
\]
where $\unit$ is the unit object. 
Since the slice $(\Pic\Rscr){}_{/\unit}$ of an $\infty$-groupoid is contractible and since \(\Pic\Rscr\) is grouplike, there is an equivalence of (the underlying) spaces \[\left(\pi_0 \Pic\Rscr\right) \times \B{\Aut}_\Rscr(\unit) \simeq \Pic\Rscr.\]

In~\cite{ABG}, it is shown that the possibly large $\infty$-groupoid $\Pic(\Rscr)$ is essentially small, and consequently the Picard $\infty$-groupoid functor together with the presheaf functor $\Pre \colon \Sscr \to \Prl$ gives rise to the following adjunction.
\begin{equation} \label{adj}
    %\Pre \colon \Alg_{\Ebb_n}^{\mathrm{gp}}(\Sscr) \rightleftarrows \Alg_{\Ebb_n}(\Prl) \colon \Pic
    \begin{tikzcd}
        \Pre\, \colon
        \Alg_{\Ebb_n}^{\grouplike} (\Sscr) \arrow[r, bend left=25, "{}"{name=F}] & \Alg_{\Ebb_n}(\Prl) \,:\Pic
        \arrow[l, bend left=25, "{}"{name=G}]
        \arrow[phantom, from=F, to=G, "\dashv" rotate=-90]
    \end{tikzcd}
\end{equation}
Thus, for a presentable $\Ebb_n$-monoidal $\infty$-category $\Rscr$, the adjunction counit \begin{equation}\Sscr_{/\Pic(\Rscr)} \to \Rscr\label{counit}\end{equation} is an $\Ebb_n$-monoidal functor that preserves colimits. This functor is uniquely characterized by the colimit-preserving property and the property that when restricted to $\Pic_R$ via the Yoneda embedding $\Pic(\Rscr)\to \Sscr_{/\Pic(\Rscr)}$, it agrees with the canonical inclusion $\Pic(\Rscr) \to \Rscr$.

Let $R$ be an $\Ebb_{n+1}$-ring spectrum. The $\infty$-category $\Rscr=\Mod_R$ of right $R$-module spectra is a presentably $\Ebb_n$-monoidal stable $\infty$-category~\cite{HA}. In this case, write $\Pic_R$ for the Picard $\infty$-groupoid $\Pic(\Mod_R)$. 
Since $R$ is the unit object in $\Mod_R$, the Picard $\infty$-groupoid $\Pic_R = \pi_0\Pic_R \times \B\Aut_\Rscr(\unit)$ is equivalent to $\pi_0\Pic_R \times \B\GL_1R$.
We fix an $\Ebb_{n+1}$-ring $R$ throughout this paper.

\subsection{Parametrized spectra and Thom spectra}

We briefly recall the theory of parametrized objects in an $\infty$-category developed in~\cite{ABG}.

For a presentable $\infty$-category $\Cscr$, there exists a unique limit-preserving functor 
\[\Cscr_{/(-)}\colon \Sscr^\op \to \Prl\] that carries a point $\ast$ to $\Cscr$, due to the free cocompletion universality of $\Sscr$.
The property of preservation of limits determines the value at an arbitrary space $X \simeq \displaystyle\lim_{X} \ast$, namely $\Cscr_{/X} \simeq \Fun(X,\Cscr)$. 
The $\Cscr$-valued presheaf category $\Cscr_{/X}$ is referred to as the $\infty$-category of parametrized objects parametrized by $X$.
When $\Cscr=\Sscr$, the slice category $\Sscr_{/X}$ coincides with $\Fun(X, \Sscr)$ basically by the straightening-unstraightening theorem of~\cite{HTT}. Moreover, the equivalence commutes with base-change: 
\[
    \begin{tikzcd}
    \Fun(X,\Sscr) \arrow[d, "{f_!}"'] \arrow[r, "\sim"] & \Sscr_{/X} \arrow[d, "{f_!}"] \\
    \Fun(Y,\Sscr) \arrow[r, "\sim"] & \Sscr_{/Y}
    \end{tikzcd}
\]
It follows, in particular, that the object $p\colon E \to X$ in $\Sscr_{/X}$ corresponding to a functor $F \colon X \to \Sscr$ has total space equivalent to $\displaystyle\colim_{X} F$.

Let $f\colon X \to \Pic_R$ be a map of $\infty$-groupoids, which is considered as a family of invertible $R$-modules parametrized over a space $X$. The Thom spectrum associated to the map $f$ is defined as the colimit of the following composite functor. \[Mf \coloneqq \colim(X \xrightarrow{f} \Pic_R \subset \Mod_R) = p_!f\]
Here, $p$ is the map $p\colon X \to \ast$.
Since generally colimits commute and $M$ sends each point $\ast \to \Pic_R$ to itself in $\Mod_R$, the adjunction counit $\Sscr_{/\Pic_R} \to \Mod_R$ in~\eqref{counit} gives the Thom spectrum functor \[M \colon \Sscr_{/\Pic_R} \to \Mod_R.\]
Moreover, the monoidal property of the adjunction~\eqref{adj} implies that the Thom spectrum of an $\Ebb_n$-monoidal map $f\colon X \to \Pic_R$ is an $\Ebb_n$-ring. \[M \colon \Alg_{/\Ebb_n}(\Sscr_{/\Pic_R}) \to \Alg_{/\Ebb_n} (\Mod_R)\]
The monoidal property of Thom spectra can also be justified by the following adjunction provided by~\cite[Corollary 2.11]{simple}.
\begin{equation}\begin{tikzcd}\label{oplke}
    \Alg_{X/\Ebb_n} (\Mod_R) 
    \arrow[r, bend left=25, "{p_!}"'{name=F}] & \Alg_{/\Ebb_n} (\Mod_R)
    \arrow[l, bend left=25, "{p^\ast}"'{name=G}]
    \arrow[phantom, from=F, to=G, "\dashv" rotate=-90]
\end{tikzcd}\end{equation}

\begin{definition}
    Let $X$ be an $\Ebb_n$-space and suppose that a map $f\colon X \to \Pic_R$ is an $\Ebb_n$-monoidal map.
    We say a map $Mf \to R$ is an \textit{$\Ebb_n$-orientation} when it has a structure of a map of $\Ebb_n$-rings.
\end{definition}

The following lemma is well-known. We will record an argument for the readers convenience.

\begin{lemma}\label{lemma:nullhomotopy}
    Let $f\colon X \to \Pic_R$ be an $n$-fold loop map. Then there is a natural equivalence
    \[\Mapl_{\Alg_{/\Ebb_n}(\Mod_R)} (Mf, R) \; \simeq \; \left\{\B^n f\right\} \quad \underset{\strut\mathclap{\Map_{\Sscr_{\!\ast}}\!(\B^n\! X, \B^n\Pic_R)}}{\times} \quad \{\ast\}. \]
    In other words, an $\Ebb_n$-orientation canonically corresponds to a trivialization of the $n$-fold delooping of $f$.
\end{lemma}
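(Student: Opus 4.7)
The plan is to combine the universal property of the Thom spectrum encoded in the adjunction~\eqref{oplke} with the recognition principle identifying grouplike $\Ebb_n$-spaces with $n$-fold loop spaces.

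Regarding $f$ as an object of $\Alg_{X/\Ebb_n}(\Mod_R)$ via the inclusion $\Pic_R \hookrightarrow \Mod_R$, one has $Mf \simeq p_! f$ along $p\colon X \to \ast$, so the adjunction~\eqref{oplke} yields
\[
\Mapl_{\Alg_{/\Ebb_n}(\Mod_R)}(Mf, R) \; \simeq \; \Mapl_{\Alg_{X/\Ebb_n}(\Mod_R)}(f, p^*R),
\]
where $p^*R$ denotes the constant family at the unit $R \in \Pic_R$. Since $f$ lands in $\Pic_R$ and $p^*R$ is constant at the unit, an $\Ebb_n$-monoidal transformation $f \to p^*R$ is pointwise a morphism between invertible $R$-modules whose two-sided inverse is produced by tensoring with the $\Pic_R$-inverse of $f$. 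Such transformations therefore lift uniquely to $\Pic_R$, giving a natural equivalence
\[
\Mapl_{\Alg_{X/\Ebb_n}(\Mod_R)}(f, p^*R) \; \simeq \; \{f\} \times_{\Mapl_{\Alg_{\Ebb_n}^{\grouplike}(\Sscr)_\ast}(X,\, \Pic_R)} \{\ast\},
\]
which is the space of pointed $\Ebb_n$-monoidal nullhomotopies of $f$.

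Finally, the recognition principle for grouplike $\Ebb_n$-spaces provides a natural equivalence on based mapping spaces
\[
\Mapl_{\Alg_{\Ebb_n}^{\grouplike}(\Sscr)_\ast}(X, \Pic_R) \; \simeq \; \Mapl_{\Sscr_\ast}(\B^n X, \B^n \Pic_R),
\]
sending $f$ to $\B^n f$ and the constant-at-unit map to the constant-at-basepoint map. Passing to the fiber over $f$ and $\ast$ on the left then corresponds to the fiber over $\B^n f$ and $\ast$ on the right, yielding the stated pullback description.

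I expect the main technical obstacle to be the middle step, namely making rigorous that an $\Ebb_n$-monoidal transformation from $f$ to $p^*R$ in $\Mod_R$ is canonically the same datum as a pointed $\Ebb_n$-monoidal nullhomotopy of $f$ inside $\Pic_R$. Unpacking this cleanly requires bookkeeping about lax versus strong $\Ebb_n$-monoidal transformations and verifying the pointwise invertibility assertion from the previous paragraph. An alternative route that avoids the issue is to invoke the symmetric monoidal $\Pre \dashv \Pic$ adjunction~\eqref{adj} directly, performing the entire mapping-space computation inside $\Sscr_{/\Pic_R}$ from the outset rather than translating through $\Mod_R$.
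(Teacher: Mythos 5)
Your route is genuinely different from the paper's, which simply cites \cite[Theorem 3.5, Lemma 3.15]{simple} to produce the intermediate pullback square
\(
\{f\} \times_{\Mapl_{\Alg_{\Ebb_n}\Sscr}(X,\Pic_R)} \Map_{\Alg_{\Ebb_n}\Sscr}(X,(\Pic_R)_{/R}),
\)
notes that the slice \((\Pic_R)_{/R}\) of an \(\infty\)-groupoid is contractible, and then applies the looping-delooping equivalence. You instead derive the same intermediate equivalence from scratch out of the adjunction~\eqref{oplke}, so your proof is more self-contained and avoids the black box; the trade-off is that you must carry out exactly the step you flag, namely identifying \(\Mapl_{\Alg_{X/\Ebb_n}(\Mod_R)}(f,p^\ast R)\) with the space of pointed \(\Ebb_n\)-monoidal nullhomotopies of \(f\) inside \(\Pic_R\). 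This is in fact the content that~\cite{simple} packages for you.

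One substantive imprecision: the justification you give for pointwise invertibility---that an inverse of \(\alpha_x\colon f(x)\to R\) is ``produced by tensoring with the \(\Pic_R\)-inverse of \(f\)''---does not work as stated. Tensoring \(\alpha_x\) with \(f(x)^{-1}\) yields a map \(R\to f(x)^{-1}\), which has the wrong source and target to invert \(\alpha_x\). The correct argument must use that \(X\) is grouplike: the monoidal naturality square for the pair \((x,x^{-1})\), together with the unit constraint forcing \(\alpha_1\simeq\id_R\), shows that \(\alpha_x\otimes\alpha_{x^{-1}}\) is an equivalence (since it factors the identity through the strong-monoidality equivalence \(f(x)\otimes f(x^{-1})\xrightarrow{\sim}R\)), and one then peels off a one-sided inverse for \(\alpha_x\) by tensoring \(\alpha_{x^{-1}}\) with \(\id_{f(x^{-1})^{-1}}\). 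Without the group-inverse in \(X\), pointwise invertibility would fail; a map of invertible \(R\)-modules, such as \(\eta\colon S^1\to\S\), certainly need not be an equivalence. Also, I would not rely on your proposed shortcut via the \(\Pre\dashv\Pic\) adjunction~\eqref{adj} alone: that adjunction computes mapping spaces in \(\Alg_{\Ebb_n}(\Prl)\), not mapping spaces out of \(Mf\) in \(\Alg_{/\Ebb_n}(\Mod_R)\), so it does not directly sidestep the bookkeeping.
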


\begin{proof}
    Combining~\cite[Theorem 3.5]{simple} with~\cite[Lemma 3.15]{simple} gives the following equivalence.
    \[
        \Mapl_{\Alg_{/\Ebb_n}(\Mod_R)} (Mf, R) \; \simeq \;
        \{f\} \quad \underset{\strut\mathclap{\displaystyle\Mapl_{\Alg_{\Ebb_n}\!\Sscr}(X, \Pic_R)}}{\times} \quad \Map_{\Alg_{\Ebb_n}\!\Sscr} (X, (\Pic_R)_{/R})
    \]
    Since the space $(\Pic_R){}_{/R}$ is contractible and there is the looping-delooping equivalence
    \[\Sscr_\ast^{\ge n} \xleftarrow[\sim]{\B^n} \Alg_{\Ebb_n}^\grouplike (\Sscr) \rightleftarrows \Alg_{\Ebb_n} (\Sscr),\]
    the claim follows.
\end{proof}

\begin{example}
    Let $X$ be the infinite loop space $\Zbb\times BU$ corresponding to the connective $K$-theory spectrum $ku$. The complex $J$-homomorphism \[J_\Cbb \colon \Omega^\infty ku \to \Pic_\S\] is an infinite loop map, and thus the associated Thom spectrum \[MJ_\Cbb \simeq MUP = \bigvee_{n=-\infty}^\infty \Sigma^{2n} MU\] is an $\Ebb_\infty$-ring.
\end{example}

The following will be used in the main theorem.

\begin{corollary}\label{corollary:E2MUP}
    Let \(R\) be an even-periodic \(\Ebb_2\)-ring spectrum equipped with an \(\Ebb_2\)-ring map \(MU \to R\).
    Then there exists an \(\Ebb_2\)-ring map \(MUP \to R\).
\end{corollary}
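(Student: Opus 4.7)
The plan is to invoke \cref{lemma:nullhomotopy} with \(n=2\): an \(\Ebb_2\)-ring map \(MUP = MJ_\Cbb \to R\) is the same data as a pointed null-homotopy of
\[
B^2 J_\Cbb \colon B^2(\Zbb \times BU) \longrightarrow B^2 \Pic_R,
\]
where \(J_\Cbb\) has been post-composed along the base-change \(\Pic_\S \to \Pic_R\). First, I will exploit the canonical \(\Ebb_\infty\)-splitting \(\Zbb \times BU \simeq \Zbb \oplus BU\) given by the trivial-line-bundle section, so that \(B^2(\Zbb \times BU) \simeq \Cbb\Pbb^\infty \times B^2 BU\); since \(J_\Cbb\) is an \(\Ebb_\infty\)-map, \(B^2 J_\Cbb\) is, canonically up to homotopy, the pointwise product of its two restrictions in the \(\Ebb_\infty\)-group \(B^2 \Pic_R\). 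It is therefore enough to null-homotope each restriction and to sum the two null-homotopies using the group operation on \(B^2 \Pic_R\).

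The restriction to \(B^2 BU\) is the double delooping of \(J_\Cbb|_{BU}\), whose Thom spectrum is \(MU\); by a second application of \cref{lemma:nullhomotopy}, the given \(\Ebb_2\)-ring map \(MU \to R\) is already a pointed null-homotopy of \(B^2 J_\Cbb|_{B^2 BU}\). For the restriction to \(\Cbb\Pbb^\infty = B^2 \Zbb\), the underlying \(\Ebb_\infty\)-map \(J_\Cbb|_\Zbb\) sends \(1\) to \([\Sigma^2 R]\). An even-periodic structure on \(R\) is, by definition, an invertible \(\beta \in \pi_2 R\), equivalently an equivalence \(\Sigma^2 R \simeq R\) of \(R\)-modules---a path in \(\Pic_R\) from \([\Sigma^2 R]\) to the unit. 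Since \(\Zbb\) corresponds to the \(0\)-truncated connective spectrum \(H\Zbb\) and \(\Zbb\) is projective as an abelian group on one generator, the chosen path promotes canonically to an infinite-loop null-homotopy of \(J_\Cbb|_\Zbb\); applying \(B^2\) gives the required pointed null-homotopy of \(B^2 J_\Cbb|_{\Cbb\Pbb^\infty}\).

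Summing the two null-homotopies in \(B^2 \Pic_R\) produces the desired null-homotopy of \(B^2 J_\Cbb\), and hence, via \cref{lemma:nullhomotopy}, the claimed \(\Ebb_2\)-ring map \(MUP \to R\). I expect the most delicate step to be this last promotion---turning the bare equivalence \(\Sigma^2 R \simeq R\) into a full infinite-loop null-homotopy of \(J_\Cbb|_\Zbb\); the key input is the rigidity of mapping spectra out of \(H\Zbb\), namely \(\Map_\Sp(H\Zbb, \mathrm{pic}_R) \simeq \Pic_R\), which ensures that no obstruction lives beyond the class of \([\Sigma^2 R]\) in \(\pi_0 \Pic_R\), which is exactly what even-periodicity kills.
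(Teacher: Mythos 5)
Your proof follows the same overall strategy as the paper's: split $\Omega^\infty ku \simeq \Zbb \oplus BU$ as a grouplike $\Ebb_n$-space, use the given $\Ebb_2$-ring map $MU \to R$ to trivialize the restriction of $J_\Cbb$ to the $BU$-factor, use periodicity of $R$ to trivialize the restriction to the $\Zbb$-factor, and add the two trivializations in $B^2\Pic_R$. But the argument you give for the $\Zbb$-factor contains a real error that the paper's argument is specifically designed to avoid.

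You assert that $\Map_{\Sp}(H\Zbb, \mathrm{pic}_R) \simeq \Pic_R$, citing the projectivity of $\Zbb$ as an abelian group. This is false, and the appeal to projectivity is the wrong intuition: the free connective spectrum on one generator is $\S$, not $H\Zbb$, so the adjunction that would give the claimed rigidity is $\Map_{\Sp}(\S, M)\simeq \Omega^\infty M$. By contrast $H\Zbb$ is the image of $\Zbb$ under the inclusion of the heart of the $t$-structure, which is not a left adjoint to $\Omega^\infty$, and the spectrum-level cohomology $H^\ast(H\Zbb;A)$ is nontrivial in arbitrarily high positive degrees (it supports all the Steenrod operations). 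Concretely, in the cofiber sequence $\S \to H\Zbb \to C$ the cofiber $C$ already has $\pi_2 C \cong \Zbb/2$ coming from $\pi_1\S$, and this feeds into potentially nonzero obstruction groups such as $H^3(C;\pi_2 R)$; so knowing that $J_\Cbb|_{\Zbb}$ evaluates to the trivial class $[\Sigma^2 R]\in\pi_0\Pic_R$ does not by itself produce an $\Ebb_\infty$-null-homotopy of $J_\Cbb|_{\Zbb}$. This is exactly the step you flagged as delicate, and the rigidity statement you rely on does not hold.

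The paper sidesteps this by downgrading from $\Ebb_\infty$ to $\Ebb_2$ throughout. After factoring the $\Ebb_2$-map $\Zbb \to \Pic_R$ through the identity component $B\GL_1 R$ (using periodicity to kill $[\Sigma^2 R]$ in $\pi_0\Pic_R$), one identifies $\Ebb_2$-trivializations with pointed maps of \emph{spaces} $\Cbb P^\infty = B^2\Zbb \to B^3\GL_1 R$. Here a clean parity argument applies: the reduced cohomology of $\Cbb P^\infty$ is concentrated in even degrees, while $\pi_n B^3\GL_1 R = \pi_{n-3}R$ for $n\ge 4$ vanishes in even $n$ since $R$ is even, so every obstruction group $H^n(\Cbb P^\infty;\pi_n B^3\GL_1 R)$ vanishes and the mapping space is connected. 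You should replace the false rigidity claim by this even/odd obstruction-theory argument; the fix requires no new ideas beyond what you already set up, since you are already applying $B^2$ at the end. (As a minor remark: the paper only claims the splitting $\Omega^\infty ku\simeq\Zbb\oplus BU$ is $\Ebb_2$, constructed via $ku^2(\Cbb P^\infty)\cong\Zbb$; your trivial-bundle section argument for an $\Ebb_\infty$-splitting is plausible, but more than is needed once the rest of the argument is carried out at the $\Ebb_2$ level.)
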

\begin{proof}
    The assumption provides an \(\Ebb_2\)-trivialization of the map \(BU \to \Pic_\S \to \Pic_R\), where \(BU\) is the unit component of \(\Omega^\infty ku\simeq \Zbb \times BU\). We observe that the splitting \(\Omega^\infty ku \simeq \Zbb \times BU\) can be made \(\Ebb_2\): In fact, we can provide an \(\Ebb_2\)-splitting \(\Zbb \to \Omega^\infty ku\) by the computation of \(ku^{2}(\Cbb P^\infty)=ku^2(K(\Zbb,2))\cong \Zbb\).

    To finish the proof, it suffices to prove the \(\Ebb_2\)-triviality of the map \(J_\Cbb \colon \Omega^\infty ku \to \Pic_R\).
    For this, since the splitting \(\Omega^\infty ku \simeq \Zbb \oplus BU\) can be made \(\Ebb_2\) and since the map \(BU\to \Pic_R\) is already \(\Ebb_2\)-trivial, it suffices to show that the map \(J_\Cbb \colon \Zbb = (\Omega^\infty ku)/\!/_{BU} \to \Pic_R\) is \(\Ebb_2\)-trivial.
    Since \(R\) is \(2\)-periodic, the map factors as \(J_\Cbb\colon \Zbb \to B{\GL_1}R \to \Pic_R\). We now see that the mapping space \(\Map_{\Ebb_2}(\Zbb, B{\GL}_1R) \simeq \Map_\ast (\Cbb{P}^\infty, B^3{\GL}_1R)\) is connected since the cohomology of \(\Cbb{P}^\infty\) is even and the homotopy groups of \(B^3{\GL}_1R\) is odd.
\end{proof}

%\subsection{Twisted cohomology}

\begin{definition}
    Let $f \colon X \to \Mod_R$ be a parametrized $R$-module on a space $X$. The associated twisted $R$-cohomology $R_f^\bullet(X)$ is defined to be the negative stable homotopy group of the internal hom object $\pi_{-\bullet}F_R(Mf,R)$ in $\Mod_R$.
\end{definition}

\begin{remark}
    By the defining universal property of the Thom spectrum and the projection formula, there is naturally an isomorphism \[R^\bullet_f(X) = \pi_{-\bullet} F_R(Mf,R) \cong \pi_{-\bullet} p_\ast F_{(\Mod_R){}_{/X}}(f,p^\ast R) = \pi_{-\bullet} p_\ast F_X (f,p^\ast R).\]
\end{remark}

\subsection{Twisted parametrized spectra}

We continue to fix an $\Ebb_{n+1}$-ring $R$.
The following definition of $R$\textit{-haunts} and \textit{twisted parametrized spectra}, or $R$\textit{-specters}, is originally due to Douglas~\cite{Douglas} and reformulated in~\cite{ABG}.

\begin{definition}
    The Brauer $\infty$-groupoid $\Br_R$ is defined as the Picard $\infty$-groupoid of the $\Ebb_{n-1}$-monoidal $\infty$-category $\Mod_{\Mod_R}(\Prl{}^{, \omega})$ of $\Mod_R$-modules in the $\infty$-category of compactly-generated presentable $\infty$-categories $\Prl{}^{, \omega}$.
\end{definition}

This Brauer $\infty$-groupoid deloops the Picard $\infty$-groupoid:

\begin{lemma}
    $\Omega\Br_R \simeq \Pic_R$.
\end{lemma}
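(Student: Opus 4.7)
The plan is to identify $\Omega \Br_R$ with the automorphism space of the monoidal unit of $\Mod_{\Mod_R}(\Prl{}^{,\omega})$ and then recognize that automorphism space as $\Pic_R$ via Morita theory.

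First I would apply the general principle already recorded in the pullback square following~\cref{def:pic}: for any presentably $\Ebb_k$-monoidal $\infty$-category $\Rscr$ with unit $\unit$, that pullback square identifies $\Omega\Pic(\Rscr)$ canonically with the $\Ebb_k$-space $\Aut_\Rscr(\unit)$. Applying this to $\Rscr = \Mod_{\Mod_R}(\Prl{}^{,\omega})$, whose monoidal unit is $\Mod_R$ itself (regarded as a module category over itself via its own multiplication), I obtain a natural equivalence
\[\Omega \Br_R \;\simeq\; \Aut_{\Mod_{\Mod_R}(\Prl{}^{,\omega})}(\Mod_R).\]

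Next I would invoke the Morita/Eilenberg--Watts style identification of $\Mod_R$-linear, colimit-preserving endofunctors of $\Mod_R$ with $\Mod_R$ itself: the assignment $M \mapsto (-)\otimes_R M$ exhibits a monoidal equivalence
\[\Mod_R \;\xrightarrow{\sim}\; \End_{\Mod_{\Mod_R}(\Prl{}^{,\omega})}(\Mod_R).\]
The core observation is that any colimit-preserving $\Mod_R$-linear endofunctor $F$ of $\Mod_R$ is determined by its value on the compact generator $R$, via a canonical natural equivalence $F(-) \simeq (-)\otimes_R F(R)$. Restricting this equivalence to invertible objects, the endofunctor $(-)\otimes_R M$ is invertible in $\Mod_{\Mod_R}(\Prl{}^{,\omega})$ precisely when $M \in \Pic_R$; passing to maximal $\infty$-subgroupoids yields $\Aut_{\Mod_{\Mod_R}(\Prl{}^{,\omega})}(\Mod_R) \simeq \Pic_R$, and combining with the first step gives the claim.

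The main obstacle will be bookkeeping of the higher monoidal coherence: verifying that composition on the endomorphism side corresponds, as an $\Ebb_{n-1}$-monoidal structure (up to a harmless opposite coming from $M \mapsto (-)\otimes_R M$ versus $M \mapsto M \otimes_R(-)$), to the tensor product on $\Mod_R$, so that the resulting identification of $\Omega \Br_R$ with $\Pic_R$ is automatically one of $\Ebb_n$-spaces and not merely of underlying spaces. For the bare equivalence stated in the lemma, however, only the underlying identification is required, and the Morita input can be cited from the standard literature on presentable module $\infty$-categories.
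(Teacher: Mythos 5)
Your argument is exactly the paper's: loop the Picard/Brauer $\infty$-groupoid to get $\Aut_{\Mod_{\Mod_R}}(\Mod_R)$, then identify that automorphism space with $\Pic_R$ via the Morita/Eilenberg--Watts equivalence $\Mod_R \simeq \End_{\Mod_{\Mod_R}}(\Mod_R)$. The paper simply records this as a one-line chain of equivalences, whereas you spell out the Eilenberg--Watts input; the content is the same.
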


\begin{proof}
    $\Omega\Br_R \simeq \Aut_{\Mod_{\Mod_R}}(\Mod_R) \simeq (\Mod_R^\times){}^\simeq = \Pic_R$.
\end{proof}

In~\cite{ABG} the twisted parametrized spectra are defined as follows.

\begin{definition}
    An $R$-haunt over a space $X$ is an object in the $\infty$-category of $\Mod_R$-torsors: \[\Fun(X, \Br_R) = \Fun(X, \Pic_{\Mod_R})\]
    
    Given an $R$-haunt $\Hcal \colon X \to \Br_R$, the $\infty$-category of twisted parametrized spectra is the presentable stable $\infty$-category $\displaystyle\lim_{X}\Hcal$.
\end{definition}

Let $\B\Pic_R$ denote the unit component of the Brauer $\infty$-groupoid $\Br_R$.

\begin{example}
    According to~\cite{Brauer}, the Brauer group $\pi_0 \Br$ of the sphere spectrum $\S$ is trivial. Hence it follows that $\Br_\S = \B\Pic_\S$.
\end{example}

In particular, we can restrict our attention to those $R$-haunts especially coming from maps to $\B\Pic_R$, as long as we are interested in twisted parametrized spectra over the sphere spectrum.

\begin{remark}
We demonstrate a concrete model for the $\Pic_R$-bundles.
Since $\Pic_R$ is a grouplike $\Abb_\infty$-space, the delooping $\B\Pic_R$ can be the subcategory of $\hat{\Cat}$ having a unique object $\Mod_R$ that deloops $\Pic_R$.
Therefore, the universal $(\Mod_R)$-bundle $\Escr\Pic_R$ over $\B\Pic_R$ is the following base change of the universal cocartesian fibration.

\[\begin{tikzcd}
    \Escr\Pic_R & \hat{\Cat}\times_{N^{\mathrm{hc}}(\sset)} N^{\mathrm{hc}}(\sset)_{\ast/} \\
    \B\Pic_R & \hat{\Cat}
    \arrow[from=1-1, to=1-2]
	\arrow[two heads, from=1-2, to=2-2]
	\arrow[from=1-1, to=2-1]
	\arrow[hook, from=2-1, to=2-2]
	\arrow["\lrcorner"{anchor=center, pos=0.125}, draw=none, from=1-1, to=2-2]
\end{tikzcd}\]

In particular, the sets of objects, 1-morphisms, and 2-simplices are written as follows.

\begin{gather*}
    \left\{\triangle^0 \to \Escr\Pic_R\right\} = \left\{A\in \Mod_R\right\} \\
    \{A\} \underset{\left\{\triangle^0 \to \Escr\Pic_R\right\}}{\times}
    \left\{\triangle^1 \to \Escr\Pic_R\right\} \underset{\left\{\triangle^0 \to \Escr\Pic_R\right\}}{\times} \{B\} \hspace{90pt}  \\ \hspace{90pt} = \left\{f\colon A \underset{R}{\otimes} D \to B \setmid 
    D \text{ is invertible in }\Mod_R\right\}
\end{gather*}

\[
    \left\{\triangle^2 \to \Escr\Pic_R\right\} = \left\{\begin{tikzcd}
    {A \underset{R}{\otimes} D \underset{R}{\otimes} F} && C \\
	& {B \underset{R}{\otimes} F}
	\arrow[""{name=0, anchor=center, inner sep=0}, "h", from=1-1, to=1-3]
	\arrow["g"', from=2-2, to=1-3]
	\arrow["{f\otimes F}"', from=1-1, to=2-2]
	\arrow["\simeq" sloped, shift right=2, shorten <=3pt, phantom, from=0, to=2-2]\end{tikzcd}
    \middle| D,\, F \text{ are invertible.}\right\}
\]
\end{remark}

For an $\S$-haunt $\Hcal \colon X \to \Br_\S$, we write $\Hcal_R$ for the following composite. \[X \to \Br_\S \to \Br_R\]
It is basically the delooping of the functor $(\bullet)\otimes R\colon \Pic_\S \to \Sp \to \Mod_R$.

We are interested in the conditions in which the twisted parametrized spectra in $\lim \Hcal$ have twisted $R$-cohomology theory. In the next section, we provide a sufficient condition for the existence of twisted $R$-cohomology.

\section{The main results}

The main theorem is stated as follows. Here, a homotopy commutative ring spectrum $R$ is said to be \textit{even-periodic} if $\pi_{\mathrm{odd}} R =0$ and if there exists an element $\beta\in \pi_2 R$ which is a unit in the commutative ring $\pi_\bullet R$.

\begin{theorem}
    Let $R$ be an even-periodic $\Ebb_2$-ring spectrum. Then the composite \[ \Omega^{\infty-1}ku \xrightarrow{\B J_\Cbb} \B\Pic_{\S} \to \B\Pic_R \] is null-homotopic. Moreover, the desired null-homotopy is determined essentially uniquely by the choice of an $\Ebb_1$-complex orientation of $R$.
\end{theorem}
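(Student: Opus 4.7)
The plan is to apply~\cref{lemma:nullhomotopy} with $n=1$ to the composite $f \colon \Omega^\infty ku \xrightarrow{J_\Cbb} \Pic_\S \to \Pic_R$. This identifies the space of pointed null-homotopies of $\B J_\Cbb \colon \Omega^{\infty-1}ku \to \B\Pic_R$ with the space of $\Ebb_1$-$R$-algebra maps $Mf \to R$; using $Mf \simeq MUP \otimes_\S R$ together with the adjunction~\eqref{oplke}, the latter is identified with $\Map_{\Alg_{\Ebb_1}(\Sp)}(MUP, R)$. Thus the theorem reduces to constructing, essentially uniquely, an $\Ebb_1$-ring map $MUP \to R$ from an $\Ebb_1$-complex orientation $MU \to R$.

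For this construction I would follow~\cref{corollary:E2MUP}, relaxing all $\Ebb_2$-structures to $\Ebb_1$. The $\Ebb_2$- (hence $\Ebb_1$-) splitting $\Omega^\infty ku \simeq \Zbb \times BU$ established in~\cref{corollary:E2MUP} reduces the task to $\Ebb_1$-trivializing the restriction $\Zbb \to \Omega^\infty ku \xrightarrow{J_\Cbb} \Pic_R$, since the $BU$-factor is trivialized by the given orientation via~\cref{lemma:nullhomotopy}. Because $R$ is $2$-periodic, the $\Zbb$-factor lands in the identity component $B\GL_1 R \subset \Pic_R$, and the computation $\Map_{\Ebb_1}(\Zbb, B\GL_1 R) \simeq \Omega B^2 \GL_1 R = B\GL_1 R$ shows that the mapping space is connected, so the map is $\Ebb_1$-null-homotopic.

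The main obstacle, deserving the most care, is combining the two factor-wise $\Ebb_1$-trivializations into a single $\Ebb_1$-trivialization of $J_\Cbb$ on $\Zbb \times BU$, since this product is not the coproduct in grouplike $\Ebb_1$-spaces and so mapping spaces out of it do not split naively. I would exploit the $\Ebb_\infty$-structure on $\Omega^\infty ku$, which forces the images of $\Zbb$ and $BU$ in $\Pic_R$ to commute coherently, so that the $\Ebb_1$-multiplication on $\Pic_R$ assembles the partial trivializations. The ``essentially unique'' clause then amounts to the observation that the residual $\GL_1 R$-torsor of choices on the $\Zbb$-factor is pinned down by the image of $[\Cbb P^1] \in \pi_2 MU$ in $\pi_2 R$ under the given orientation, a quantity whose invertibility follows from the even-periodicity assumption together with the standard comparison $ku^2(\Cbb P^\infty) \cong \Zbb$ used in~\cref{corollary:E2MUP}.
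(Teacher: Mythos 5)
Your proposal takes a genuinely different route from the paper, and the difference conceals a real gap. Both you and the paper reduce, via \cref{lemma:nullhomotopy} and the adjunction, to producing an $\Ebb_1$-ring map $MUP \to R$; this reduction is fine. But the paper does not ``relax $\Ebb_2$ to $\Ebb_1$'' in \cref{corollary:E2MUP} — it goes \emph{up} to $\Ebb_2$. It first invokes the cited theorem (on even $\Ebb_2$-rings admitting $\Ebb_2$-ring maps from $MU$) to obtain an honest $\Ebb_2$-ring map $MU \to R$, then applies \cref{corollary:E2MUP} at the $\Ebb_2$-level, and only at the very end forgets the resulting $\Ebb_2$-map $MUP \to R$ down to $\Ebb_1$ to feed it into \cref{lemma:nullhomotopy}. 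Your proposal drops this input entirely and tries to run the corollary directly at the $\Ebb_1$-level, which is exactly where it breaks.

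The break is in the step you yourself flagged: combining the two factor-wise $\Ebb_1$-trivializations. Since $\Zbb\times BU$ is not the coproduct in grouplike $\Ebb_1$-spaces, delooping once and using the cofiber sequence $\B\Zbb\vee \B BU \to \B\Zbb\times\B BU\to \B\Zbb\wedge\B BU$ leaves a genuine obstruction class in $\Map_\ast(\B\Zbb\wedge \B BU,\, \B\Pic_R)\simeq \Map_\ast(\Sigma SU,\,\B\Pic_R)$ that need not vanish and whose nullity you do not address. Appealing to the $\Ebb_\infty$-structure on $\Omega^\infty ku$ does not help: the composite $J_\Cbb\colon \Omega^\infty ku\to\Pic_R$ is only an $\Ebb_1$-map, because $\Pic_R$ carries only a grouplike $\Ebb_1$-structure when $R$ is merely $\Ebb_2$; the images of $\Zbb$ and $BU$ therefore commute at best up to uncontrolled homotopy, which is not enough to coherently assemble a null-homotopy. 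Note also that \cref{corollary:E2MUP}'s own argument is \emph{not} a naive combination of two trivializations but a quotient argument — it trivializes $J_\Cbb\rvert_{BU}$ and then factors $J_\Cbb$ through $(\Omega^\infty ku)/\!/_{BU}\simeq\Zbb$ — and it is precisely the $\Ebb_2$-structure that makes this quotient and the factorization behave. Reproducing that at $\Ebb_1$ would require a separate argument that you have not supplied. So the mapping-space computation $\Map_{\Ebb_1}(\Zbb,\B\GL_1R)\simeq B\GL_1R$ being connected is correct, but the assembly you need around it is missing; the paper's detour through the $\Ebb_2$-orientation theorem of \cite{genera} is what makes the whole argument close.
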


\begin{proof}
    Any even-periodic ring spectrum admits a complex orientation.
    Since the $\Ebb_2$-ring spectrum $R$ is assumed to be even, the theorem of~\cite{genera} implies that there is a lift of the orientation by an $\Ebb_2$-ring map $MU \to R$.
    By periodicity, there is an $\Ebb_2$-ring map $MJ_\Cbb =MUP \to R$.\footnote{This is due to \cref{corollary:E2MUP}.} By \cref{lemma:nullhomotopy}, the $\Ebb_1$-map $MJ_\Cbb \to R$ uniquely determines a null-homotopy of %%$\Ebb_1$-maps \[\qquad\qquad J_\Cbb \sim \ast \; \colon \Omega^{\infty}ku \to \B GL_1R.\] As a consequence, the delooping $\B J_\Cbb \colon \Omega^{\infty-1}ku \to \B^2 GL_1 R \to \B \Pic_R$ is also null-homotopic.
    the map $\B J_\Cbb \colon \Omega^{\infty-1}ku \to \B\Pic_R$.
\end{proof}

\begin{corollary}
    Let $R$ be as above and $\Hcal \colon X \to \B\Pic_{\S}$ be a haunt. Suppose that $\Hcal$ factors through the map $\B J_\Cbb \colon \Omega^{\infty-1}ku \to \B\Pic_{\S}$. Then an $\Ebb_1$-complex orientation of $R$ gives an equivalence of $\infty$-categories \[ \lim_{X} \Hcal_R \simeq \lim_{X} \Mod_R \simeq \left( \Mod_R \right)\hbox{}_{/X}. \]
    Consequently, the twisted $R$-cohomology theory for twisted parametrized spectra in $\displaystyle\lim_{X} \Hcal$ is well-defined.
\end{corollary}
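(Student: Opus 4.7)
The plan is to reduce the corollary directly to the theorem just proved, combined with the formal properties of parametrized $\infty$-categories recalled earlier in the paper. First I would unwind the definition of $\Hcal_R$: by hypothesis $\Hcal$ factors as $X \to \Omega^{\infty-1}ku \xrightarrow{\B J_\Cbb} \B\Pic_\S$, so post-composing with the canonical map $\B\Pic_\S \to \B\Pic_R$ identifies $\Hcal_R$ with the three-fold composite
\[
    X \longrightarrow \Omega^{\infty-1}ku \xrightarrow{\B J_\Cbb} \B\Pic_\S \longrightarrow \B\Pic_R.
\]
The preceding theorem supplies an essentially canonical null-homotopy of the two right-hand arrows, determined by the chosen $\Ebb_1$-complex orientation of $R$. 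Concatenating with $X$ yields a canonical null-homotopy of $\Hcal_R$ itself.

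Next I would package this null-homotopy as a trivialization of the associated $\Mod_R$-torsor. Via the classifying functor $\B\Pic_R \hookrightarrow \lCat$ whose base-change along the universal cocartesian fibration produces the universal $\Mod_R$-torsor, as explained in the remark preceding the statement, the null-homotopy lifts to an equivalence between the functor $\Hcal_R \colon X \to \lCat$ and the constant functor with value $\Mod_R$. Taking the limit over $X$ then gives
\[
    \lim_X \Hcal_R \;\simeq\; \lim_X \mathrm{const}_{\Mod_R} \;\simeq\; \Fun(X, \Mod_R) \;\simeq\; (\Mod_R)_{/X},
\]
in which the middle equivalence is the cotensor formula for a constant diagram indexed by a space, and the final equivalence is the straightening--unstraightening identification recalled in Section~2.2.

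For the stated consequence about cohomology I would note that any twisted parametrized spectrum $\Tcal \in \lim_X \Hcal$ becomes, after base-change along $\S \to R$, an object of $\lim_X \Hcal_R$, which is now identified with $(\Mod_R)_{/X}$ by the previous step. The ordinary parametrized $R$-cohomology of this object then serves as the twisted $R$-cohomology of $\Tcal$. No genuinely new obstacle arises in the argument: the substantive content is entirely contained in the theorem, and the remaining work is the purely formal passage from ``null-homotopy of a map of spaces'' to ``trivialization of the corresponding bundle of $\infty$-categories,'' which is built into the definition of $\B\Pic_R$ as a classifying space for $\Mod_R$-torsors. The only point of care is the implicit naturality: one must check that the null-homotopy produced by the theorem, being essentially canonical, is itself functorial enough to induce an equivalence of diagrams $X \to \lCat$, but this is immediate from its characterization via an $\Ebb_1$-ring map $MUP \to R$.
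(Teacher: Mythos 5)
Your argument is correct and is essentially the one the paper has in mind (the paper states the corollary without proof, treating it as an immediate consequence of the theorem). The reduction — factor $\Hcal_R$ through $\B J_\Cbb$, compose the theorem's null-homotopy with $X \to \Omega^{\infty-1}ku$ to trivialize $\Hcal_R$, then pass from an equivalence of $\lCat$-valued diagrams to an equivalence of their limits and identify $\lim_X \mathrm{const}_{\Mod_R}$ with $\Fun(X,\Mod_R)\simeq(\Mod_R)_{/X}$ — is exactly the standard argument. One very minor remark: the closing worry about whether the null-homotopy is ``functorial enough'' is a non-issue, since a null-homotopy of a map $X \to \B\Pic_R$ is by definition a path in $\Map(X,\B\Pic_R)$, and post-composing with the inclusion $\B\Pic_R\hookrightarrow\lCat$ automatically yields a natural equivalence of $\lCat$-valued diagrams; no extra coherence check is needed.
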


\section{Generalization to the equivariant setting}\label{equivariant}

Let $G$ be a compact Lie group and $\Orb_G$ denote its topological orbit category, whose objects are the orbits $G/H$ for $H\leq G$ closed and whose mapping spaces are the spaces of equivariant maps. We work in the $\infty$-category theory internal to the presheaf $\infty$-topos $\Sscr^G\coloneqq\Pre(\Orb_G)$, the $\infty$-category of genuine equivariant $G$-spaces by the theorem of Elmendorf.
The symmetric monoidal $\infty$-category $\Sp^G$ of genuine $G$-spectra is characterized by the property that it universally inverts the finite-dimensional representation spheres in $\Sscr^G_\ast$.
Write $\S_G$ for the $G$-equivariant sphere spectrum, which is a monoidal unit in $\Sp^G$.

\subsection{Presentable $\infty$-categories internal to $G$-spaces}

An $\infty$-category internal to $\Sscr^G$ is simply called a \textit{$G$-category}.
Given a $G$-category $\Cscr$, viewed as a $\lCat$-valued presheaf on $\Orb_G$, its value at $G/H\in \Orb_G$ will be denoted by $\Cscr^H$.
This notation agrees with the fact that the $G$-category $\base$ associated to $\Sscr^G$ is described as the following functor.
\[\base \colon \Orb_G \ni G/H \mapsto \Sscr^H \simeq \Sscr^G_{/(G\!/\!H)} \in \lCat \]
Similarly, the $\infty$-category $\Sp^G$ can be equipped with the $G$-category structure 
\[\underline{\Sp^G} \colon G/H \mapsto \Sp^H \simeq \Sp^G \otimes_{\Sscr^G} \Sscr^H \]
as in~\cite[Definition 4.2]{TwAmbi}, which is in fact presentably symmetric monoidal in the $G$-categorical sense.
We follow~\cite{presentable} the definitions of presentable $G$-categories and $\infty$-operads internal to $\Sscr^G$.
Note that the notions of $G$-operads and symmetric monoidal $G$-categories are straightforward in the sense that they are just the presheaves over $\Orb_G$ valued in the $\infty$-categories of $\infty$-operads and of symmetric monoidal $\infty$-categories, respectively.
Note that our notion of $G$-operads may differ from something called $G$-$\infty$-operads used in equivariant homotopy theory for finite groups.

The $\infty$-category of $G$-functors between $G$-categories $\Cscr$ and $\Dscr$ is described as the end
\[\Fun^G(\Cscr, \Dscr) \simeq \coend{G/H\in \Orb_G} \Fun(\Cscr^H, \Dscr^H), \]
which is the value of the internal functor $G$-category $\Fun(\Cscr, \Dscr)$ evaluated at the terminal orbit $G/G$.
Here, an end is defined as a limit over the suitable twisted arrow $\infty$-category.
We define internal functor category of two $G$-operads $\Cscr$ and $\Dscr$ to be the following end.
\[\Alg^G_\Cscr(\Dscr) \simeq \coend{G/H} \Alg_{\Cscr^H} (\Dscr^H) \]
For example, $\calg^G(\Dscr)$ is equivalent to $\lim_{\Orb_G^\op} \calg(\Dscr^H) \simeq \calg(\Dscr^G)$ since $G/G\in \Orb_G$ is a terminal object.

Let $\PrlG$ denote the $\infty$-category of presentable $G$-categories and functors preserving $G$-colimits between them.
Since the functor $\Sscr^G_{/\bullet}\colon \Sscr^G \to \Mod_{\Sscr^G}(\Prl)$ is symmetric monoidal as well as the functor~\cite[8.3]{presentable} $(\bullet)\otimes_{\Sscr^G} \base$ $\colon \Mod_{\Sscr^G} (\Prl) \to \PrlG$, the composition defines a functor 
\[\Pre_\Omega \colon \calg^\grouplike (\Sscr^G)\simeq \Fun(\Orb_G^\op, \calg^\grouplike(\Sscr)) \rightarrow \calg(\PrlG)\] with right adjoint $\Pic\colon \calg(\PrlG) \to \calg^\grouplike(\Sscr^G)$.
Its value at $\Cscr \in \calg(\PrlG)$ has underlying $G$-category the composite
\[\Pic(\Cscr) \colon \Orb_G^\op \xrightarrow{\Cscr} \calg(\Prl) \xrightarrow{\Pic} \calg^\grouplike(\Sscr^G). \]
The first arrow here exists since $Cscr$ is presentably symmetric monoidal in the $G$-categorical sense.
For a commutative ring $G$-spectrum $R\in \calg(\Sp^G)$, we write $\Pic_R$ for $\Pic\left(\Mod_R(\underline{\Sp^G})\right)$. See~\cite[7.2]{presentable} for the (presentably) symmetric monoidal $G$-category of $R$-modules.

As a corollary, the adjunction counit in particular defines a functor of $\infty$-categories
\[M\colon \Sscr^G_{/\Pic_R} \to \Mod_R \coloneqq \Mod_R(\Sp^G) \]
which is symmetric monoidal, colimit-preserving, and sends a point in $\Pic_R^H$ identified with an invertible object $E$ in $\Mod_{R}(\Sp^H)$ to the induction $\Ind^G_H E$ in $\Mod_{R}(\Sp^G)$.

Taking ends of the both hand side in~\eqref{oplke}, we have an adjunction 
\[p_! \colon \Alg_{X}^G (\Mod_R(\underline{\Sp^G})) \rightleftarrows \calg(\Mod_R(\Sp^G)) \,\colon p^\ast \]
for each $X\in \calg(\Sscr^G)$.
By the above construction of the Thom $G$-spectrum functor, $Mf$ agrees with $p_!f$ for a map $f\in \calg(\Sscr^G_{/\Pic_R})$, also regarded as a functor $X\xrightarrow[f]{} \Pic_R \to \Mod_R(\underline{\Sp^G})$.

\subsection{Generalization of the main theorem}

In order to generalize the main result, we need a $G$-equivariant model of periodic complex Thom spectrum.

Let $\Vect^H$ be the topological category of finite-dimensional orthogonal $H$-representations and equivariant isometries between them. They assemble into a single $G$-category $\underline{\Vect^G} \colon G/H \mapsto \Vect^H$.
We equip it with the symmetric monoidal structure under the direct product of orthogonal representations.
The $G$-functor sending an orthogonal representation to its one-point compactification is a symmetric monoidal $G$-functor $\underline{\Vect^G} \to \underline{\Sp^G}$.
Since every representation sphere is invertible as an equivariant spectrum, it defines a symmetric monoidal map of $G$-spaces $(\underline{\Vect^G}\,)^\simeq \to \Pic_{\S_G}$.
Taking group completion on the left hand side, the $G$-map adjuncts to a $G$-map of infinite loop $G$-spaces, which we call $J_G$.

\begin{theorem}
    Let $\Hcal\colon X \to \B\Pic_{\S_G}$ be a $G$-map.
    Suppose that $\Hcal$ factors through the map $BJ_G$.
    Then there is an equivalence of $\infty$-categories.
    \[\lim\left(X \xrightarrow{\Hcal\otimes M\!J_G} \B\Pic_{M\!J_G} \hookrightarrow \lCatG\right) \simeq \lim_{G\!/\!H\in (\Orb_G)_{/X}^\op}\Mod_{M\!J_G}({\Sp^H}) = (\Mod_{M\!J_G})_{/X} \]
\end{theorem}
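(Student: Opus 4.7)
The plan is to mirror the non-equivariant corollary, replacing the classical $J$-homomorphism with its $G$-equivariant counterpart $J_G$. The crucial technical input will be a $G$-equivariant analog of \cref{lemma:nullhomotopy}, which translates $\Ebb_1$-orientations into null-homotopies of deloopings.

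The first step is to establish such an equivariant nullhomotopy lemma: for any grouplike $\Ebb_1$-map $f \colon X \to \Pic_R$ in $\Sscr^G$, there should be a natural equivalence
\[
\Mapl_{\Alg^G_{/\Ebb_1}(\underline{\Mod_R})}(Mf, R) \;\simeq\; \{\B f\} \times_{\Map^G_\ast(\B X, \B\Pic_R)} \{\ast\}.
\]
This should follow by internalizing the proof of \cref{lemma:nullhomotopy}: combine the $G$-categorical version of \cite[Theorem 3.5, Lemma 3.15]{simple}, available via the adjunction~\eqref{oplke} as extended to the $G$-setting in the previous subsection, with the $G$-contractibility of the over-$G$-groupoid $(\Pic_R)_{/R}$ and the internal looping/delooping equivalence in $\Sscr^G$.

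The second step is to apply this lemma to the tautological identity $\Ebb_1$-orientation $\id_{MJ_G}\colon MJ_G \to MJ_G$. This yields a canonical null-homotopy of the composite $G$-map
\[
\Omega^{\infty-1}(\underline{\Vect^G}\,)^{\grouplike} \xrightarrow{\B J_G} \B\Pic_{\S_G} \to \B\Pic_{MJ_G}.
\]
By the assumed factorization of $\Hcal$ through $\B J_G$, we conclude that the composite $\Hcal \otimes MJ_G \colon X \to \B\Pic_{MJ_G}$ is canonically null-homotopic. This trivialization exhibits the $G$-functor $X \to \B\Pic_{MJ_G} \hookrightarrow \lCatG$ as equivalent to the constant $G$-functor at $\underline{\Mod_{MJ_G}}$.

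Finally, the $G$-limit of the constant functor at $\underline{\Mod_{MJ_G}}$ is, by the end formula for $G$-functor categories recalled just above, equivalent to $\Fun^G(X, \underline{\Mod_{MJ_G}}) \simeq (\Mod_{MJ_G})_{/X}$, yielding the desired equivalence. The main obstacle I anticipate is the first step: rigorously internalizing the nullhomotopy lemma within the $G$-categorical formalism of~\cite{presentable}. The formal ingredients --- the $G$-categorical universal property of Thom $G$-spectra, the computation of internal mapping spaces as fibers, and the looping/delooping equivalence for grouplike $\Ebb_1$-objects in $\Sscr^G$ --- should all hold, but require care to verify; once they are in place, the remaining steps are formal analogs of the non-equivariant argument.
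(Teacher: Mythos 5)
Your proposal is correct and follows essentially the same strategy as the paper: internalize the nullhomotopy lemma to the $G$-categorical setting, apply it to the tautological identity orientation $\id_{MJ_G}$ to obtain a canonical trivialization of the delooped $J$-homomorphism composed with $\otimes MJ_G$, and then identify the limit of the resulting constant functor with $(\Mod_{MJ_G})_{/X}$. The "main obstacle" you flag in step one is exactly what the paper resolves: rather than citing a packaged $G$-equivariant version of~\cite[Theorem 3.5, Lemma 3.15]{simple}, the paper unfolds the relevant $G$-mapping space as an end $\int_{G/H \in \Orb_G}$ of the corresponding non-equivariant mapping spaces, applies the cited results of~\cite{simple} fiberwise at each orbit $G/H$, and then reassembles; the only technical caveat is that the last step of the paper's chain (from $\Ebb_\infty$-trivializations of $J_G$ to null-homotopies of its delooping) is recorded only as a map rather than an equivalence, which suffices since one only needs to produce a null-homotopy, not classify them.
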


\begin{proof}
    It suffices to show that the composite $\xrightarrow{\B J_G} \Pic_{\S_G} \xrightarrow{\otimes M\!J_G}$ is null-homotopic.
    So we replace $X$ by the domain of $J_G$, which is an inifnite loop object. We have a following chain of equivalences.
    \begin{align*}
        \Map_{\calg\Mod_{M\!J_G}} (M\!J_G, M\!J_G) &\simeq \Mapl_{\Alg^G_X \Mod_{M\!J_G}} (J_G, p^\ast M\!J_G) \\
        &\simeq \coend{G/H} \Mapl_{\Alg_{X^{\!H}} \Mod_{M\!J_G}(\Sp^H)} ((J_G)^H, (M\!J_G)^H \circ p) \\
        &\simeq \coend{G/H} \{J_G {}^H\} \underset{\Alg_{X^{\!H}}\Mod_{M\!J_G}(\Sp^H)}{\times} \Alg_{X^{\!H}} \left(\Mod_{M\!J_G}(\Sp^H)_{/M\!J_G{}^H}\right) \\
        &\simeq \coend{G/H} \{J_G{}^H\} \underset{\Mapl_{\calg (\Sscr)}(X^H, \Pic_{M\!J_G}^H)}{\times} \ast \\
        &\simeq \{J_G\} \underset{\Mapl_{\calg\Sscr^{\!G}}(X, \Pic_{M\!J_G})}{\times} \ast \\
        &\to \{BJ_G\} \underset{\Map_{\Sscr_{\!\ast}}(\B X, \B\Pic_{M\!J_G})}{\times} \ast
    \end{align*}
    The first equivalence is by $p_!\vdash p^\ast$, the third equivalence is~\cite[Lemma 2.12]{simple}, the fourth is completely parallel to~\cite[Theorem 3.5]{simple}.
    The space on the left has a base point, namely the identity map, and it defines a desired null-homotopy that belongs to the right hand side.
\end{proof}

\bibliographystyle{unsrt}
\bibliography{bpic}

\end{document}